\crefname{equation}{}{}
\Crefname{equation}{Equation}{Equations}
\crefname{theorem}{Theorem}{Theorems}
\Crefname{theorem}{Theorem}{Theorems}
\crefname{lemma}{Lemma}{Lemmas}
\Crefname{lemma}{Lemma}{Lemmas}
\crefname{proposition}{Proposition}{Propositions}
\Crefname{proposition}{Proposition}{Propositions}
\crefname{corollary}{Corollary}{Corollaries}
\Crefname{corollary}{Corollary}{Corollaries}
\crefname{conjecture}{Conjecture}{Conjectures}
\Crefname{conjecture}{Conjecture}{Conjectures}
\crefname{section}{Section}{Sections}
\Crefname{section}{Section}{Sections}
\crefname{example}{Example}{Examples}
\Crefname{example}{Example}{Examples}
\crefname{problem}{Problem}{Problems}
\Crefname{problem}{Problem}{Problems}
\crefname{remark}{Remark}{Remarks}
\Crefname{remark}{Remark}{Remarks}
\crefname{figure}{Figure}{Figures}
\Crefname{figure}{Figure}{Figures}
\newtheorem{theorem}{Theorem}
\newtheorem{lemma}[theorem]{Lemma}
\newcommand{\arxiv}[1]{\href{http://arxiv.org/abs/#1}{\texttt{arXiv:#1}}}
\title{Weakly norming graphs are edge-transitive}
\author{Alexander Sidorenko \\
\small\tt sidorenko.ny@gmail.com}
\date{\today}
\begin{document}
\maketitle

\begin{abstract}
Let $\mathcal{H}$ be the class of 
bounded measurable symmetric functions on $[0,1]^2$.
For a function $h \in \mathcal{H}$
and a graph $G$
with vertex set $\{v_1,\ldots,v_n\}$  
and edge set $E(G)$,
define
\[
  t_G(h) \; = \; 
  \int \cdots \int \prod_{\{v_i,v_j\} \in E(G)} h(x_i,x_j)  
    \: dx_1 \cdots dx_n \: .
\]
Answering a question raised by Conlon and Lee, 
we prove that in order 
for $t_G(|h|)^{1/|E(G)|}$ to be a norm on $\mathcal{H}$, 
the graph $G$ must be edge-transitive.
\end{abstract}

\vspace{0.4cm}

\noindent
Let $\mathcal{H}$ be the class of 
bounded measurable symmetric functions on $[0,1]^2$, 
and $\mathcal{H}_+$ be the subclass 
of nonnegative functions in $\mathcal{H}$. 
The functions from $\mathcal{H}_+$ with values in $[0,1]$ 
are known as ``graphons'' (see \cite{Lovasz:2010}). 
For a function $h \in \mathcal{H}$
and a graph $G$
with vertex set $\{v_1,\ldots,v_n\}$  
and edge set $E(G)$,
the \emph{homomorphism density} from $G$ to $h$ is defined as
\[
  t_G(h) \; = \; 
  \int \cdots \int \prod_{\{v_i,v_j\} \in E(G)} h(x_i,x_j)  
    \: dx_1 \cdots dx_n \: .
\]

Hatami \cite{Hatami:2010} and Lov\'{a}sz \cite[Chapter 14.1]{Lovasz:2010}
posed the problem of determining 
when $t_G(h)$ provides a (semi-)norm on $\mathcal{H}$. 
A graph $G$ is called \emph{norming} 
if $|t_G(h)|^{1/|E(G)|}$ is a semi-norm,
and \emph{weakly norming} if $t_G(|h|)^{1/|E(G)|}$ is a norm. 
Every norming graph is weakly norming. 
Lee and Sch\"{u}lke~\cite{Lee:2019} demonstrated that 
$G$ is (weakly) norming if and only if 
the functional $t_G(\cdot)$ is strictly convex on (nonnegative) functions 
$h \in \mathcal{H}$.
Conlon and Lee \cite[Section~6]{Conlon:2017} proposed some conjectures 
characterizing norming and weakly norming graphs. 

Every weakly norming graph is necessarily bipartite 
(see~\cite{Hatami:2010,Lovasz:2010}). 
Thus, it is possible to define norming and weakly norming graphs 
using asymmetric functions $h$ 
(as was done by Hatami in~\cite{Hatami:2010}). 
It is easy to see that every (weakly) norming graph 
in the asymmetric setting 
is (weakly) norming in the symmetric setting as well. 
For this reason, we will restrict our attention to the symmetric case.

All known examples of norming and weakly norming graphs 
are edge-transitive 
(see \cite{
Conlon:2017,Garbe:2019,Hatami:2010,Kral:2019,Lee:2019,Lovasz:2010}). 
Thus, it is natural to ask 
if every edge-transitive bipartite graph is weakly norming, 
and if every weakly norming graph is edge-transitive 
(see \cite[Section 6]{Conlon:2017}). 
The first question was recently answered in the negative 
by Kr\'{a}l’ et al \cite{Kral:2019} who proved that 
toroidal grids $C_{2k} \Box C_{2k}$ with $k \geq 3$ 
are not weakly norming. 
In this note, we provide the positive answer to the second question. 

\begin{theorem}\label{th:transitive}
Every weakly norming graph is edge-transitive.
\end{theorem}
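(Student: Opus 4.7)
The plan is to establish edge-transitivity via a first-order perturbation argument combined with graphon reconstruction. The starting point will be the characterization (due to Hatami~\cite{Hatami:2010}) that $G$ is weakly norming if and only if the generalized Hölder inequality
\[
 \int_{[0,1]^{V(G)}} \prod_{e \in E(G)} h_e(x_{u_e}, x_{v_e})\, dx \;\leq\; \prod_{e \in E(G)} t_G(h_e)^{1/m}
\]
holds for every family $(h_e)_{e \in E(G)} \subset \mathcal{H}_+$, where $m = |E(G)|$ and $u_e, v_e$ denote the endpoints of the edge $e$.

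The central step will be to deduce an \emph{edge-independent} identity. Fix a positive graphon $g$, a symmetric bounded function $\rho$, and an edge $e_0 \in E(G)$. I would apply the Hölder inequality with $h_{e_0} = g + \varepsilon \rho$ and $h_e = g$ for $e \neq e_0$, where $|\varepsilon|$ is small enough to ensure $g + \varepsilon \rho \geq 0$. Set
\[
 T^e(g, \rho) \;:=\; \int \rho(x_{u_e}, x_{v_e}) \prod_{f \neq e} g(x_{u_f}, x_{v_f})\, dx.
\]
The left-hand side is \emph{exactly} linear in $\varepsilon$, namely $t_G(g) + \varepsilon\, T^{e_0}(g, \rho)$, while the right-hand side expands as $t_G(g) + (\varepsilon/m)\sum_{e} T^e(g, \rho) + O(\varepsilon^2)$. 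Letting $\varepsilon \to 0^{+}$ and $\varepsilon \to 0^{-}$ (both permitted since $g$ is positive) gives the equality
\[
 T^{e_0}(g, \rho) \;=\; \frac{1}{m} \sum_{e \in E(G)} T^e(g, \rho).
\]
As this holds for every $e_0$, the value $T^e(g, \rho)$ is independent of $e$.

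Next, I would write $T^e(g, \rho) = \int \rho(x, y)\, K^e_g(x, y)\, dx\, dy$, where $K^e_g(x, y)$ is the two-rooted density kernel of $G - e$ obtained by integrating $\prod_{f \neq e} g$ over all vertex variables except $u_e$ and $v_e$, with those two pinned at $x$ and $y$. Since $\rho$ ranges over all symmetric bounded functions, the symmetrization $\tfrac{1}{2}(K^e_g(x,y) + K^e_g(y,x))$ must be independent of $e$ for every positive graphon $g$. By the standard reconstruction principle for dense graph limits, the symmetric kernel (as a functional of $g$) determines the $2$-rooted graph $(G - e,\, \{u_e, v_e\})$ with unordered roots up to isomorphism. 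Thus for any two edges $e_0, e_1 \in E(G)$, there is a vertex bijection $\phi \colon V(G) \to V(G)$ mapping $E(G) \setminus \{e_0\}$ onto $E(G) \setminus \{e_1\}$ and $\{u_{e_0}, v_{e_0}\}$ onto $\{u_{e_1}, v_{e_1}\}$. Such $\phi$ is automatically an automorphism of $G$ sending $e_0$ to $e_1$, so $G$ is edge-transitive.

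The main obstacle is the derivation of the edge-independent identity. What makes it work is the asymmetry between the two sides of the Hölder inequality: exactly linear on the left, linear plus an $O(\varepsilon^2)$ correction on the right, so the $\varepsilon \to 0^{\pm}$ argument yields an equality rather than a one-sided bound. The remaining steps---interpreting $T^e$ as a rooted kernel, invoking reconstruction, and extending the rooted isomorphism to an automorphism of $G$---are routine consequences of graphon theory.
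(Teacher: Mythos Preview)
Your argument is correct and shares the paper's overall strategy---perturb the generalized H\"older inequality to first order and then invoke a reconstruction theorem---but the two implementations differ in a meaningful way. The paper perturbs \emph{two} slots simultaneously by $h\pm\varepsilon\cdot\mathbf{1}$ to obtain $t_{G-e_1}(h)=t_{G-e_2}(h)$, applies the \emph{unrooted} reconstruction theorem (\cref{th:Lovasz}) to get $G-e_1\cong G-e_2$, and then spends most of its effort on a structural argument (reduction to the connected case via \cref{th:connected}, biregularity via \cref{th:degree}, and a parity/degree analysis) to show that any such isomorphism must carry $e_1$ to $e_2$. You instead perturb a \emph{single} slot by $\varepsilon\rho$ with an arbitrary symmetric $\rho$, which extracts the finer information that the symmetrized two-rooted kernel of $(G-e,\{u_e,v_e\})$ is edge-independent; a \emph{rooted} reconstruction principle then hands you an isomorphism respecting the root pair, which is automatically an automorphism of $G$ sending $e_0$ to $e_1$. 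Your route is more streamlined in that it bypasses \cref{th:connected} and \cref{th:degree} entirely. The trade-off is that the unordered two-rooted reconstruction you invoke, while true (it follows from the linear independence of labelled homomorphism functions, e.g.\ the theory of $k$-labelled quantum graphs in \cite{Lovasz:2010}), is not as commonly quoted as the unrooted \cref{th:Lovasz}; calling it ``standard'' and ``routine'' is a bit generous, and a precise reference or a one-line justification would be appropriate.
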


To prove \cref{th:transitive}, we need a few results from 
\cite{Garbe:2019}, \cite{Hatami:2010}, and \cite{Lovasz:2010}. 

\begin{theorem}[{\cite[Theorem 1.2]{Garbe:2019}}]\label{th:connected}
A graph is weakly norming if and only if 
all its non-singleton connected components 
are isomorphic and weakly norming.
\end{theorem}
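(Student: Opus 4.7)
I establish the claim by proving both implications. The $(\Leftarrow)$ direction is immediate: if the non-singleton components of $G$ are $k \ge 1$ disjoint copies of a weakly norming graph $H$ (plus possibly isolated vertices), homomorphism density factors over disjoint unions and singletons contribute $1$, so $t_G(h) = t_H(h)^k$ and $|E(G)| = k\,|E(H)|$; hence $t_G(|h|)^{1/|E(G)|} = t_H(|h|)^{1/|E(H)|}$ is a norm by hypothesis.

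For $(\Rightarrow)$, let $G$ be weakly norming with non-singleton components $G_1, \ldots, G_m$ of edge counts $e_i$ summing to $e = |E(G)|$. The central tool is a disjoint-support factorization: if $h_1, h_2 \in \mathcal{H}_+$ are supported on disjoint blocks $[0,\alpha]^2$ and $[1-\alpha,1]^2$ with $\alpha < 1/2$, then any homomorphism from the connected $G_i$ must map $V(G_i)$ entirely into one block, giving $t_{G_i}(h_1+h_2) = t_{G_i}(h_1) + t_{G_i}(h_2)$. Setting $a_i := t_{G_i}(h_1)$ and $b_i := t_{G_i}(h_2)$, the triangle inequality for the norm $t_G(\cdot)^{1/e}$ takes the form
\[
  \Bigl[\prod_i (a_i + b_i)\Bigr]^{1/e} \;\le\; \Bigl(\prod_i a_i\Bigr)^{1/e} + \Bigl(\prod_i b_i\Bigr)^{1/e}.
\]

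My strategy is to first force all non-singleton components to be isomorphic and then derive the weak-norming of each as a corollary. For the isomorphism step, suppose for contradiction that $G_i \not\cong G_j$ for some pair $i, j$. Then $t_{G_i}$ and $t_{G_j}$ are distinct polynomial functionals on $\mathcal{H}_+$ (distinguishable via elementary test graphons such as $1_{A \times A}$ and $1_{A \times B \cup B \times A}$, which read off $|V(\cdot)|$ and bipartition sizes), and I claim the ratio $t_{G_i}(h)^{1/e_i}/t_{G_j}(h)^{1/e_j}$ is unbounded above and below on $\mathcal{H}_+$. Granting this, I would pick $h_1$ on $[0,\alpha]^2$ realizing $(a_i, a_j) = (M, 1/M)$ and $h_2$ on $[1-\alpha,1]^2$ realizing $(b_i, b_j) = (1/M, M)$, with $M \to \infty$ and the other $a_k, b_k$ normalized to bounded constants. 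Then $(a_i+b_i)(a_j+b_j) \sim M^2$, so $[\prod_k (a_k+b_k)]^{1/e}$ grows like $M^{2/e}$, while $\prod_k a_k$ and $\prod_k b_k$ stay bounded, violating the triangle inequality for large $M$. Hence all $G_i$ are isomorphic to a common graph $H$, and the identity $t_G(|h|)^{1/|E(G)|} = t_H(|h|)^{1/|E(H)|}$ transfers the weak-norming property of $G$ to $H$.

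The main obstacle is the ``unbounded-ratio'' claim: that non-isomorphic $G_i, G_j$ yield normalized densities $t_{G_i}^{1/e_i}, t_{G_j}^{1/e_j}$ whose ratio is unbounded on $\mathcal{H}_+$. One proves this by exhibiting one-parameter families of test graphons on which the two functionals scale with different exponents — for instance, $\lambda \cdot 1_{A \times A}$ makes the ratio behave as a nontrivial power of $\lambda$ or $|A|$ whenever $|V(G_i)|/e_i \ne |V(G_j)|/e_j$, and multi-part bipartite step functions separate the remaining cases. A secondary technicality is controlling the auxiliary $t_{G_k}$ for $k \ne i, j$ during the construction, which should be handled by adjusting overall scale and choosing test graphons that are ``generic'' from the standpoint of the other components.
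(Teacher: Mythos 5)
This statement is not proved in the paper at all --- it is quoted as Theorem~1.2 of Garbe--Hladk\'y--Lee \cite{Garbe:2019} --- so there is no in-paper proof to compare against; I therefore assess your argument on its own. Your $(\Leftarrow)$ direction and your disjoint-support identity $t_{G_i}(h_1+h_2)=t_{G_i}(h_1)+t_{G_i}(h_2)$ for connected $G_i$ are both correct. The genuine gap is the central claim that for non-isomorphic components the ratio $t_{G_i}(h)^{1/e_i}/t_{G_j}(h)^{1/e_j}$ is unbounded above \emph{and} below: this is false. Take $G_i=K_2$ and $G_j=P_3$, the path with two edges. By Cauchy--Schwarz, $t_{P_3}(h)\ge t_{K_2}(h)^2$ for every $h\in\mathcal{H}_+$, so $t_{K_2}(h)^{1/1}/t_{P_3}(h)^{1/2}\le 1$ identically. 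Consequently the target values $(a_i,a_j)=(M,1/M)$ are unrealizable for any $M>1$, and your construction cannot even begin for $G=K_2\sqcup P_3$, a graph the theorem must rule out. What does hold is one-sided unboundedness (one can prove it via tensor powers, since the scale-invariant ratio is multiplicative under tensor products of kernels and is not identically $1$ by \cref{th:Lovasz}), and to exploit it you must pair the extreme function with a \emph{balanced} one rather than an oppositely extreme one: e.g.\ $(a_i,a_j)=(1,1)$ from a constant kernel and $(b_i,b_j)=(t,T)$ with $t$ small and $T$ large already violates your displayed triangle inequality. Your proposed justification of unboundedness (the exponent $|V|/e$ under support-compression, plus ``multi-part bipartite step functions separate the remaining cases'') is also only a sketch and does not cover all non-isomorphic pairs.

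It is worth noting that the forward direction has a short proof from tools already stated in this paper, avoiding the unboundedness question entirely. Write $G=G_1\sqcup\cdots\sqcup G_m$ (non-singleton components, $e_i=|E(G_i)|$, $k=\sum_i e_i$) and apply \cref{th:Hatami} with $h_l=h$ on the edges of $G_j$ and $h_l=\mathbf{1}$ elsewhere: since $t_G(\mathbf{1})=1$, this gives $t_{G_j}(h)^{k}\le\bigl(\prod_i t_{G_i}(h)\bigr)^{e_j}$, i.e.\ $\log t_{G_j}(h)\le (e_j/k)\sum_i\log t_{G_i}(h)$ for each $j$ (for $h$ separated from zero). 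Summing over $j$ yields equality in every one of these inequalities, hence $t_{G_i}(h)^{1/e_i}=t_{G_j}(h)^{1/e_j}$ for all $i,j$ and, by continuity, all $h\in\mathcal{H}_+$. Then the disjoint union of $e_j$ copies of $G_i$ and that of $e_i$ copies of $G_j$ have identical homomorphism densities, so \cref{th:Lovasz} makes them isomorphic, and connectivity gives $G_i\cong G_j$; the identity $t_G(h)^{1/k}=t_{G_1}(h)^{1/e_1}$ then transfers the norm property to each component.
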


Let $E(G)=\{e_1,e_2,\ldots,e_k\}$, $e_l = \{v_{i(l)},v_{j(l)}\}$.  
Let $h_1,h_2,\ldots,h_k \in \mathcal{H}$. 
Define
\[
  t_G(h_1,h_2,\dots,h_k) \; = \; 
  \int \cdots \int \prod_{l=1}^k h_l(x_{i(l)},x_{j(l)})  
    \: dx_1 \cdots dx_n\: .
\]
We clearly have $t_G(h,h,\ldots,h) = t_G(h)$. 

\begin{theorem}[{\cite[Theorem 14.1]{Lovasz:2010}}]\label{th:Hatami}
$G$ is weakly norming if and only if 
for any $h_1,h_2,\ldots,h_k \in \mathcal{H}_+$,
\[
  t_G(h_1,h_2,\dots,h_k)^k \; \leq \;
  \prod_{l=1}^k t_G(h_l) \: .
\]
\end{theorem}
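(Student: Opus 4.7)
The plan is to prove the two directions separately. The reverse direction $(\Leftarrow)$ is largely mechanical, while the forward direction $(\Rightarrow)$ is where the substantive content lies.

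For $(\Leftarrow)$, assuming the H\"older-type inequality, I would verify the axioms for $\|h\| := t_G(|h|)^{1/k}$ to be a norm. Homogeneity is immediate; positive-definiteness (i.e.\ $t_G(|h|)=0 \Rightarrow h=0$ a.e.) is standard but worth checking for the graph at hand. For the triangle inequality, $|f+g| \leq |f|+|g|$ together with monotonicity of $t_G$ on $\mathcal{H}_+$ reduces the problem to
\[
  t_G(F+G)^{1/k} \leq t_G(F)^{1/k} + t_G(G)^{1/k}
\]
for $F,G \in \mathcal{H}_+$. Expanding the left side by multilinearity gives $t_G(F+G) = \sum_{S \subseteq E(G)} t_G(h_S)$, where $h_S$ places $G$ on the edges in $S$ and $F$ on the rest; the hypothesised H\"older inequality bounds each summand by $t_G(F)^{(k-|S|)/k} t_G(G)^{|S|/k}$, and summing via the binomial theorem yields $\bigl(t_G(F)^{1/k}+t_G(G)^{1/k}\bigr)^k$, completing the triangle inequality after taking $k$-th roots.

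For $(\Rightarrow)$, assume $G$ is weakly norming. My first attempt would be to plug the combination $\sum_l \lambda_l h_l$ (with $\lambda_l \geq 0$) into the triangle inequality, obtaining the polynomial inequality
\[
  t_G\bigl(\textstyle\sum_l \lambda_l h_l\bigr) \leq \bigl(\textstyle\sum_l \lambda_l\, t_G(h_l)^{1/k}\bigr)^k.
\]
The coefficient of $\lambda_1\cdots\lambda_k$ on the right equals $k!\prod_l t_G(h_l)^{1/k}$, while on the left it equals $\sum_{\sigma \in S_k} t_G(h_{\sigma(1)},\ldots,h_{\sigma(k)})$. A pointwise polynomial inequality over $\lambda \geq 0$ does not imply coefficient-wise comparison, so by itself this step only bounds the symmetrised sum, not the individual value $t_G(h_1,\ldots,h_k)$.

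To isolate the individual term, I would encode the tuple $(h_1,\ldots,h_k)$ as a single symmetric graphon $H \in \mathcal{H}_+$ via a block construction: partition $[0,1]$ into $k$ equal intervals $I_1,\ldots,I_k$ and set $H(x,y) := k\, h_{\phi(i,j)}(x',y')$ for $x \in I_i$, $y \in I_j$, where $\phi : [k]^2 \to [k]$ is a symmetric pairing and $x',y'$ are rescaled coordinates within blocks. Then $t_G(H)$ decomposes as a weighted sum over vertex colourings $\iota : V(G) \to [k]$ of terms $t_G(h_{\phi \circ \iota})$; the constant colourings recover $t_G(h_l)$, while generic colourings produce the desired mixed evaluations. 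Combining the norm triangle inequality applied to $H$ (and, if needed, to its $N$-fold tensor power with $N \to \infty$ to average out unwanted contributions) with the elementary bound $t_G\bigl((h_1 \cdots h_k)^{1/k}\bigr) \leq \prod_l t_G(h_l)^{1/k}$ (a direct H\"older application to the integrand) should let the unwanted terms cancel and leave the desired inequality. The principal obstacle is choosing $\phi$ and the limiting procedure so that precisely the target value $t_G(h_1,\ldots,h_k)$ is isolated with the sharp constant; this combinatorial bookkeeping is the most delicate step.
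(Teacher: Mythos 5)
First, a remark on scope: the paper does not prove this statement at all --- it is imported verbatim as \cite[Theorem 14.1]{Lovasz:2010} (the result is essentially due to Hatami \cite{Hatami:2010}), so there is no in-paper proof to compare against; your proposal has to stand on its own. Your $(\Leftarrow)$ direction does: homogeneity and monotonicity of $t_G$ on $\mathcal{H}_+$ are immediate, positive definiteness follows from the special case $h_2=\cdots=h_k={\mathbf 1}$ of the hypothesis (which yields $t_{K_2}(|h|)^k \leq t_G(|h|)$), and the expansion of $t_G(F+G)$ over subsets $S \subseteq E(G)$ combined with the hypothesised bound $t_G(h_S) \leq t_G(F)^{(k-|S|)/k} t_G(G)^{|S|/k}$ and the binomial theorem gives subadditivity. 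This is the standard argument and is correct.

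The $(\Rightarrow)$ direction, however, contains a genuine gap, and you have in effect flagged it yourself: the entire content of the theorem lies in the step you defer, namely extracting the single mixed term $t_G(h_1,\ldots,h_k)$ with the sharp constant from the sum over vertex colourings produced by the block graphon $H$. Moreover, the specific construction proposed cannot work as stated. With $k$ vertex blocks and a symmetric pairing $\phi\colon[k]^2\to[k]$, a colouring $\iota\colon V(G)\to[k]$ assigns to the edge $\{u,v\}$ the function $h_{\phi(\iota(u),\iota(v))}$; two edges sharing an endpoint whose other endpoints receive the same colour are therefore forced to carry the same function, so no colouring realises the labelling $e_l\mapsto h_l$ unless $\iota$ separates all neighbours of every vertex --- which $k$ colours need not permit --- and even when such a colouring exists it is accompanied by many other colourings whose contributions do not obviously cancel or become negligible; some of them can dominate the target term. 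The known proof does proceed via an auxiliary product-space construction plus a tensor-power/limiting argument, but the careful choice of that auxiliary structure and the verification that the unwanted terms are controlled is precisely the hard part; ``should let the unwanted terms cancel'' is an aspiration, not an argument. As written, the forward implication is a plan rather than a proof.
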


The following result was proved by Hatami in the asymmetric setting, 
but the proof remains valid in the symmetric setting 
(see \cite[Remark~2.11]{Hatami:2010}).

\begin{theorem}[{\cite[Theorem 2.10 (ii)]{Hatami:2010}}]\label{th:degree}
If a connected bipartite graph $G$ is weakly norming, 
then all vertices belonging to the same part have equal degrees.
\end{theorem}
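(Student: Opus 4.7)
The plan is to apply Hatami's inequality (Theorem~\ref{th:Hatami}) with rank-one test functions attached to the vertices of one part of the bipartition, and to read off biregularity from the second- and third-order Taylor expansions of the resulting inequality. Fix a part $A$ of the bipartition of $G$, and for each $u \in A$ choose a nonnegative $g_u \in L^\infty([0,1])$; for each edge $l=\{u,v\}$ with $u\in A$ set $h_l(x,y)=g_u(x)g_u(y)$. A direct computation gives
\[
t_G(h_1,\ldots,h_k) \;=\; \prod_{u\in A}\int g_u^{d_u}\;\cdot\;\prod_{v\in B}\int\prod_{u\in N(v)}g_u, \qquad t_G(h_l) \;=\; \prod_{w\in V(G)}\int g_{u_l}^{d_w}.
\]

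Substituting $g_u=1+\epsilon\psi_u$ with $\int\psi_u=0$ and expanding Theorem~\ref{th:Hatami} at order $\epsilon^2$ reduces it to the positive semidefiniteness of the symmetric $|A|\times|A|$ matrix $M_A$ whose diagonal entries are $\lambda_u=d_u(D-k(1+d_u))/2$ (with $D=\sum_w d_w^2$) and whose off-diagonal entries are $-\tfrac{k}{2}|N(u)\cap N(u')|$. A short double-counting argument (using $\sum_{u,u'\in A}|N(u)\cap N(u')|=\sum_{v\in B}d_v^2$) shows that $\vec{1}_A^{\top}M_A\vec{1}_A=0$ identically, so $M_A\succeq 0$ in fact forces $M_A\vec{1}_A=0$. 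This unpacks into the \emph{degree-sum identity}
\[
\sum_{v\in N(u)}d_v\;=\;d_u(\alpha-d_u), \qquad u\in A,\ \alpha:=D/k,
\]
with the analogous identity on $B$ following by running the same construction with $h_l$ indexed by the $B$-endpoint of $l$.

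Separately, applying Theorem~\ref{th:Hatami} with the simpler choice $h_l(x,y)=f(x)f(y)$ for edges incident to a chosen vertex $w_0$ and $h_l\equiv 1$ otherwise, at $f=1+\epsilon\psi$ with $\int\psi=0$, the coefficient of $\epsilon^3\int\psi^3$ (an odd moment of unrestricted sign) must vanish. This yields $k\binom{d_{w_0}}{3}=d_{w_0}\sum_w\binom{d_w}{3}$, so $(d_w-1)(d_w-2)$ is a fixed constant across all vertices of positive degree.

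Since $G$ is connected every vertex has positive degree, and as $d\mapsto(d-1)(d-2)$ is injective on integers $\geq 2$, either all $d_w$ agree (so $G$ is regular and hence biregular), or $d_w\in\{1,2\}$ for every $w$. In the latter case $G$ has maximum degree $2$, so it is an even cycle (regular) or a path $P_n$; the degree-sum identity at a degree-$1$ endpoint forces $\alpha\in\{2,3\}$, and at an interior degree-$2$ vertex forces its two neighbors to have degree-sum $2(\alpha-2)$, which rules out $P_n$ with $n\geq 4$ and leaves only $P_2$ and $P_3$, both biregular. The main obstacle will be the second-order bookkeeping together with the identity $\vec{1}_A^{\top}M_A\vec{1}_A=0$; once these are in hand the remainder is routine case analysis.
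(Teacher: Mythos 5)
The paper itself does not prove this statement---it quotes it from Hatami---so there is no internal proof to compare against; I can only assess your argument on its own terms. Your second-order computation is sound: the two product formulas for $h_l=g_{u_l}\otimes g_{u_l}$ are correct, the order-$\epsilon^2$ coefficient of the inequality in \cref{th:Hatami} is $\langle M_A,\Sigma\rangle$ with $\Sigma$ the Gram matrix of the $\psi_u$, the identity $\vec{1}_A^{\top}M_A\vec{1}_A=0$ checks out, and $M_A\succeq 0$ then forces $M_A\vec{1}_A=0$, which unpacks to the degree-sum identity. The genuine gap is the third-order step. You claim the coefficient of $\epsilon^3\int\psi^3$ must vanish because $\int\psi^3$ has unrestricted sign, but that only follows if the $\epsilon^2$ coefficient of the slack vanishes. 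Here that coefficient equals $\bigl(d_{w_0}\sum_w\binom{d_w}{2}-k\binom{d_{w_0}}{2}\bigr)\int\psi^2=\lambda_{w_0}\int\psi^2$, which is strictly positive for nonzero $\psi$ whenever $w_0$ has a neighbour of degree at least $2$ (e.g.\ $\lambda_{w_0}=4$ for $C_4$); rescaling $\psi$ cannot decouple $\int\psi^2$ from $\int\psi^3$, so nothing is forced at order $\epsilon^3$. Indeed the identity you would derive, $k\binom{d_{w_0}}{3}=d_{w_0}\sum_w\binom{d_w}{3}$, is simply false for the weakly norming star $K_{1,3}$ with $w_0$ a leaf (it reads $0=1$), so this step cannot be repaired, and your entire closing case analysis (``all degrees equal, or all degrees in $\{1,2\}$'') rests on it.

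The good news is that the correct half of your argument already suffices, so the gap is fillable without any third-order input. The degree-sum identity says every vertex $u$ has average neighbour degree $\alpha-d_u$. If $G$ is not regular, apply this at a vertex of minimum degree $d_{\min}$ and at one of maximum degree $d_{\max}$: since neighbour degrees lie in $[d_{\min},d_{\max}]$, you get $\alpha-d_{\min}\le d_{\max}$ and $\alpha-d_{\max}\ge d_{\min}$, hence $\alpha=d_{\min}+d_{\max}$ with both averages extremal. Thus every neighbour of a minimum-degree vertex has degree $d_{\max}$ and every neighbour of a maximum-degree vertex has degree $d_{\min}$; the set of vertices of degree $d_{\min}$ or $d_{\max}$ is closed under adjacency, so by connectedness it is all of $V(G)$, every edge joins the two degree classes, and these classes must be the (unique) bipartition, i.e.\ $G$ is biregular. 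Replacing your third-order step by this two-line observation---and keeping your remark that the construction must be run for both parts so the identity holds at every vertex---turns the proposal into a complete proof.
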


\begin{theorem}[{\cite[Theorem 5.29]{Lovasz:2010}}]\label{th:Lovasz}
If $t_F(h)=t_G(h)$ for all 
$h \in \mathcal{H}_+$, then $F$ and $G$ are isomorphic.
\end{theorem}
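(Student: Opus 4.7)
The plan is to reduce the continuous statement to Lov\'asz's classical 1967 theorem on graph homomorphism counts, via the well-known correspondence between graphs and step graphons. For any finite simple graph $H$ on vertex set $\{u_1,\ldots,u_N\}$, I would introduce the step graphon $W_H \in \mathcal{H}_+$ obtained by partitioning $[0,1]$ into $N$ equal subintervals $I_1,\ldots,I_N$ and setting $W_H(x,y) = 1$ when $x \in I_a$, $y \in I_b$ with $u_au_b \in E(H)$, and $W_H(x,y)=0$ otherwise. A direct change-of-variables calculation gives
\[
  t_F(W_H) \;=\; \frac{\hom(F,H)}{N^{|V(F)|}},
\]
and similarly for $G$. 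Since $W_H \in \mathcal{H}_+$, the hypothesis yields $N^{-|V(F)|}\hom(F,H) = N^{-|V(G)|}\hom(G,H)$ for every finite graph $H$.

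The second step is to pin down vertex and edge counts. Taking the constant graphon $h \equiv c$ gives $c^{|E(F)|} = c^{|E(G)|}$ for every $c>0$, hence $|E(F)|=|E(G)|$. Because $t_F$ is blind to isolated vertices of $F$, one may pass to the isolated-vertex-free parts $F^*$, $G^*$ without changing the hypothesis; plugging in the indicator graphon $h_s = \mathbf{1}_{[0,s]^2}$ yields $t_F(h_s)=s^{|V(F^*)|}$, and comparing for all $s\in(0,1)$ forces $|V(F^*)|=|V(G^*)|$. We may therefore assume $|V(F)|=|V(G)|=n$, in which case the preceding identity simplifies to $\hom(F,H)=\hom(G,H)$ for every finite graph $H$.

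It then remains to invoke the classical fact that two graphs with identical homomorphism counts from every finite graph are isomorphic. The standard argument observes that any homomorphism $\varphi\colon F\to H$ factors uniquely as a surjection onto its image followed by an injection, giving
\[
  \hom(F,H) \;=\; \sum_\pi \mathrm{inj}(F/\pi,H),
\]
where $\pi$ runs over partitions of $V(F)$ having no block that contains both endpoints of an edge, and $F/\pi$ is the corresponding simple quotient. This is a triangular system indexed by the partition lattice, so equality of all homomorphism counts inverts to equality of all injective-homomorphism counts. Setting $H=F$ gives $\mathrm{inj}(G,F) = \mathrm{inj}(F,F) \ge 1$, so there is a vertex-injective homomorphism $G\to F$; since $|V(F)|=|V(G)|$ this is a vertex bijection sending edges to edges, and $|E(F)|=|E(G)|$ then forces it to be an isomorphism. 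The most delicate point is this M\"obius-style inversion over the partition lattice; the only additional subtlety is the isolated-vertex issue, which means the theorem determines $F$ and $G$ only up to isolated components.
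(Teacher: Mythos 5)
The paper does not prove this statement; it is quoted from Lov\'asz's book and used as a black box, so there is no internal proof to compare against. Your strategy---reduce to finite graphs via step graphons and then invoke the Lov\'asz homomorphism-counting theorem---is the standard proof of this fact, and most of the steps are correct: the identity $t_F(W_H)=\hom(F,H)/N^{|V(F)|}$, the use of constant and indicator graphons to match edge counts and non-isolated-vertex counts, and the final passage from $\mathrm{inj}(G,F)=\mathrm{inj}(F,F)\geq 1$ to an isomorphism. Your caveat about isolated vertices is also correct: as literally stated the theorem fails for $F=K_2$ and $G=K_2\cup K_1$, so it must be read up to isolated vertices; in the paper's application the graphs $G_l$ being compared all have minimum degree at least $a-1\geq 1$, so this causes no harm there.

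The one step that does not work as written is the inversion. M\"obius inversion over the partition lattice of $V(F)$ turns $\hom(F,H)=\sum_\pi \mathrm{inj}(F/\pi,H)$ into $\mathrm{inj}(F,H)=\sum_\pi\mu(\hat 0,\pi)\hom(F/\pi,H)$, whose right-hand side involves homomorphism counts of \emph{proper quotients} of $F$; the hypothesis says nothing about how those compare with the corresponding counts for quotients of $G$, so the claim that ``equality of all homomorphism counts inverts to equality of all injective-homomorphism counts'' is not justified by the triangularity of the partition lattice alone. The standard repair is to subtract the two factorization identities to obtain $\sum_K\bigl(c_K(F)-c_K(G)\bigr)\,\mathrm{inj}(K,H)=0$ for every $H$, where $c_K(F)$ is the number of partitions $\pi$ with $F/\pi\cong K$, and then use the linear independence of the functions $\mathrm{inj}(K,\cdot)$ over isomorphism classes $K$: ordering graphs by vertex count and then edge count makes the matrix $\bigl(\mathrm{inj}(K_i,K_j)\bigr)$ triangular with nonzero diagonal. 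Since you have already arranged $|V(F)|=|V(G)|=n$, the only $K$ on $n$ vertices with $c_K(F)\neq 0$ is $K=F$, with $c_F(F)=1$, and similarly for $G$; the vanishing of that coefficient forces $G\cong F$ directly. With this substitution your argument is complete.
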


\begin{lemma}\label{th:lemma}
If a graph $G$ with edges $e_1,e_2,\ldots,e_k$ 
is weakly norming, then for any $h \in \mathcal{H}_+$,
\[
  t_{G-e_1}(h) \: = \:
  t_{G-e_2}(h) \: = \: \cdots \: = \:
  t_{G-e_k}(h) \: .
\]
\end{lemma}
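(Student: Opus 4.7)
The plan is to apply \cref{th:Hatami} to a perturbation of the tuple $(h,h,\ldots,h)$. At this tuple both sides of the stated inequality equal $t_G(h)^k$, so the inequality is tight, and the first-order term in any small perturbation must satisfy the same inequality; this is where the desired information comes from.

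Specifically, fix an edge $e_m$ and a parameter $\epsilon > 0$, and set $h_m = h + \epsilon$ (interpreting $\epsilon$ as the constant function of that value) and $h_l = h$ for $l \neq m$. By multilinearity of $t_G(\cdot,\ldots,\cdot)$,
\[
  t_G(h_1,\ldots,h_k) \;=\; t_G(h) + \epsilon\, t_{G-e_m}(h),
\]
while expanding $\prod_l\bigl(h(x_{i(l)},x_{j(l)}) + \epsilon\bigr)$ in the definition of $t_G(h+\epsilon)$ yields
\[
  t_G(h+\epsilon) \;=\; t_G(h) + \epsilon \sum_{l=1}^k t_{G-e_l}(h) + O(\epsilon^2).
\]
Both sides of $t_G(h_1,\ldots,h_k)^k \leq t_G(h)^{k-1}\, t_G(h+\epsilon)$ are polynomials in $\epsilon$ that agree at $\epsilon = 0$, so their first-order coefficients must satisfy
\[
  k\, t_G(h)^{k-1}\, t_{G-e_m}(h) \;\leq\; t_G(h)^{k-1} \sum_{l=1}^k t_{G-e_l}(h).
\]
When $t_G(h) > 0$, I cancel the common factor to obtain $k\, t_{G-e_m}(h) \leq \sum_{l=1}^k t_{G-e_l}(h)$ for every $m$. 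Summing these $k$ inequalities yields an equality (both sides equal $k\sum_l t_{G-e_l}(h)$), so each individual inequality must already be an equality. Hence $t_{G-e_m}(h) = \tfrac{1}{k}\sum_{l=1}^k t_{G-e_l}(h)$ is independent of $m$, as desired.

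The remaining case $t_G(h) = 0$ is the only minor obstacle, since the cancellation above fails. I would resolve it by applying the argument just given to $h + \delta$ for $\delta > 0$, which satisfies $t_G(h+\delta) \geq \delta^k > 0$, to conclude that $t_{G-e_m}(h+\delta)$ is independent of $m$, and then let $\delta \to 0^+$; the continuity of $t_F(\cdot)$ as a polynomial functional in its argument transfers the conclusion back to $h$.
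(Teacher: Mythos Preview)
Your argument is correct and rests on the same core idea as the paper's proof: apply \cref{th:Hatami} at a small perturbation of the diagonal tuple $(h,\ldots,h)$, where equality holds, and read off the constraint on first-order terms. The difference lies in the choice of perturbation. The paper perturbs \emph{two} slots at once, setting $h_1=h+\varepsilon$, $h_2=h-\varepsilon$, $h_l=h$ otherwise; then the right-hand side $t_G(h+\varepsilon)\,t_G(h-\varepsilon)\,t_G(h)^{k-2}$ has no linear term, and allowing $\varepsilon$ of either sign forces $t_{G-e_1}(h)-t_{G-e_2}(h)=0$ directly. You instead perturb a \emph{single} slot with $\varepsilon>0$, obtain $k\,t_{G-e_m}(h)\le\sum_l t_{G-e_l}(h)$ for every $m$, and conclude via the observation that numbers each bounded above by their common average must all equal that average. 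Your route has the mild advantage that only positive perturbations are needed, so membership in $\mathcal H_+$ is automatic; the paper's symmetric perturbation requires $h-\varepsilon\ge 0$, which is why it first reduces to functions separated from zero. Either way one must still handle $t_G(h)=0$, and you do this exactly as the paper does, by approximating with $h+\delta$ and passing to the limit.
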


\begin{proof}[\bf{Proof}]
We say that a function $h \in \mathcal{H}_+$ is 
\emph{separated from zero} 
if there exists $\delta_h > 0$ such that 
$h(x,y) \geq \delta_h$ for all $x,y \in [0,1]$. 
Since any nonnegative function in $\mathcal{H}_+$ 
is a limit (in $\mathcal{L}_1$ metric) 
of a sequence of functions separated from zero, 
and the functional $t_G(\cdot)$ is continuous, 
it is sufficient to prove that 
$t_{G-e_1}(h) = t_{G-e_2}(h)$ for any $h$ that is separated from zero. 
Let ${\mathbf 1}$ denote the constant $1$ function,
and let $h+\varepsilon$ be a shorthand for 
$h+\varepsilon \cdot {\mathbf 1}$. 
If $|\varepsilon| \leq \delta_h$, 
then both $h+\varepsilon$ and  $h-\varepsilon$ are nonnegative. 
Denote 
\[
  t_l \; = \; t_{G-e_l}(h) \; = \; 
  t_G(h,\ldots,h,\: h_l={\mathbf 1},\: h,\ldots,h) \: .
\]
Notice that 
\[
  t_G(h+\varepsilon,h-\varepsilon,h,\ldots,h) \; = \;
  t_G(h) + (t_1 - t_2) \varepsilon + O(\varepsilon^2) \: ,
\]
\[
  t_G(h+\varepsilon) \; = \;
  t_G(h) + (t_1+t_2+\ldots+t_k) \varepsilon + O(\varepsilon^2) \: .
\]
By \cref{th:Hatami},
\[
  t_G(h+\varepsilon,h-\varepsilon,h,\ldots,h)^k \; \leq \;
  t_G(h+\varepsilon) \: t_G(h-\varepsilon) \: t_G(h)^{k-2} \: .
\]
Since 
$t_G(h+\varepsilon) \: t_G(h-\varepsilon) = t_G(h)^2 + O(\varepsilon^2)$,
we get
\[
  t_G(h)^k + k t_G(h)^{k-1}(t_1 - t_2) \varepsilon + O(\varepsilon^2)
    \; \leq \;
  t_G(h)^k + O(\varepsilon^2) \: .
\]
As $\varepsilon$ can be of any sign, and $t_G(h) \neq 0$, 
we conclude that $t_1-t_2 = 0$. 
\end{proof}

\begin{proof}[\bf{Proof of \cref{th:transitive}}]
By \cref{th:connected}, we may assume that $G$ is connected.
As $G$ is connected and bipartite, 
the two parts of its vertex set are well defined, 
we denote them $A$ and $B$.
By \cref{th:degree}, there are integers $a,b$ such that 
all vertices from $A$ have degree $a$, 
and all vertices from $B$ have degree $b$. 
Without loss of generality, we may assume that $a \leq b$.
If $a=1$, then $G$ is a star, and is edge-transitive.
We will assume that $a\geq 2$.
Let $\{e_1,e_2,\ldots,e_k\}$ be the edge set of $G$.
Let $G_l$ denote the subgraph of $G$ obtained by dropping edge $e_l$. 
Since $G$ is connected, and each vertex has degree at least $2$, 
then $G$ has an edge which is not a bridge, 
so at least one of the subgraphs $G_1,G_2,\ldots,G_k$ must be connected. 
By \cref{th:lemma,th:Lovasz}, the graphs $G_1,G_2,\ldots,G_k$ are isomorphic, 
hence, each of them is connected.

We are going to demonstrate that there is an automorphism of $G$ 
which maps edge $e_1$ into $e_2$. 
As $G_1$ and $G_2$ are isomorphic, 
there exists a permutation $\pi$ of the vertices of $G$ 
which provides such an isomorphism. 
It means that if $l \geq 2$, then 
$\pi$ maps $e_l$ into one of the edges among $e_1,e_3,\ldots,e_k$. 
We claim that $\pi$ maps $e_1$ into $e_2$.
Indeed, let $e_1 = \{u_1,v_1\}$, $e_2=\{u_2,v_2\}$ where 
$u_1,u_2 \in A$ and $v_1,v_2 \in B$. 
It is possible that either $u_1=u_2$ or $v_1=v_2$, 
but not simultaneously. 
The degree of $u_1$ in $G_1$ is $a-1$,
while the degree of any vertex in $G_2$, apart from $u_2$ and $v_2$, 
is either $a$ or $b$. 
Recall that $a \leq b$.
Thus, either $\pi(u_1)=u_2$, or $a=b$ and $\pi(u_1)=v_2$. 
In the second case, 
the only vertex of degree $a-1=b-1$ in $G_2$ (apart from $v_2$) is $u_2$, 
hence $\pi(v_1)=u_2$. 
In the case $\pi(u_1)=u_2$, since $G_2$ is connected, 
$\pi(v_1)$ is connected to $\pi(u_1)$ by a path of odd length, 
and thus, $\pi(v_1) \in B$. 
Any vertex $u \in B-\{v_2\}$ has degree $b$ in $G_2$, 
and $v_1$ has degree $b-1$ in $G_1$, hence, $\pi(v_1)=v_2$. 
Therefore, $\pi$ is an automorphism of $G$ 
which maps $e_1$ into $e_2$. 
The same argument can be applied to any pair of edges of $G$.
\end{proof}

\vspace{3mm}
\noindent
{\bf Acknowledgments.}
I would like to thank Joonkyung Lee for helpful discussions, 
and the three anonymous referees 
for their careful reading of the article and valuable suggestions.

\end{document}